\newtheorem{theorem}[equation]{Theorem}
\newtheorem{lemma}[equation]{Lemma}
\newtheorem{corollary}[equation]{Corollary}
\newtheorem{proposition}[equation]{Proposition}
\numberwithin{equation}{section}
\begin{document}

\title[$A$-hypergeometric series and the Hasse-Witt matrix]{$A$-hypergeometric series and the Hasse-Witt matrix of a hypersurface}
\author{Alan Adolphson}
\address{Department of Mathematics\\
Oklahoma State University\\
Stillwater, Oklahoma 74078}
\email{adolphs@math.okstate.edu}
\author{Steven Sperber}
\address{School of Mathematics\\
University of Minnesota\\
Minneapolis, Minnesota 55455}
\email{sperber@math.umn.edu}
\date{\today}
\keywords{}
\subjclass{}
\begin{abstract}
We give a short combinatorial proof of the generic invertibility of the Hasse-Witt matrix of a projective hypersurface.  We also examine the relationship between the Hasse-Witt matrix and certain $A$-hypergeometric series, which is what motivated the proof.  
\end{abstract}
\maketitle

\section{Introduction}

The Hasse-Witt matrix of a projective hypersurface over a finite field of characteristic $p$ gives essentially complete mod $p$ information about the zeta function of the hypersurface.  In particular, it determines the number of unit roots of the zeta function.  L. Miller\cite{M} was the first to prove the generic invertibility of the Hasse-Witt matrix, which implies that for a generic hypersurface, the number of unit roots of the zeta function equals the geometric genus.   Miller's proof involved an extensive study of the action of Frobenius on the coherent cohomology of hypersurfaces.  We refer the reader to Koblitz\cite{K}, Miller\cite{M2}, and Illusie\cite{I} for further results of this type.  The purpose of this note is to give a short combinatorial proof of the generic invertibility of the Hasse-Witt matrix of a hypersurface.

We give the proof of generic invertibility in Section 2.  In Section 3, we discuss the relationship between the Hasse-Witt matrix and certain $A$-hypergeometric series that satisfy the Picard-Fuchs equation of the hypersurface.  This relationship motivated the proof in Section 2 and will be the basis for future work on the unit roots of the zeta function.  

\section{Generic invertibility}

Consider a homogeneous polynomial of degree $d$ in $n+1$ variables with indeterminate coefficients:
\begin{equation}
f_\Lambda(x_0,\dots,x_n) =  \sum_{k=1}^N \Lambda_k x^{{\bf a}_k}\in {\mathbb Z}[\Lambda_1,\dots,\Lambda_N][x_0,\dots,x_n].
\end{equation}
We write ${\bf a}_k = (a_{0k},\dots,a_{nk})$ with $\sum_{i=0}^n a_{ik} = d$.  Let ${\mathbb N}$ denote the set of nonnegative integers and put
\[ U = \bigg\{ u=(u_0,\dots,u_n)\in{\mathbb N}^{n+1}\mid \text{$\sum_{i=0}^n u_i=d$ and $u_i>0$ for $i=0,\dots,n$}\bigg\}, \]
i.~e., $\{x^u\mid u\in U\}$ is the set of all monomials of degree $d$ that are divisible by the product $x_0\cdots x_n$.  We assume throughout that $d\geq n+1$, which implies $U\neq\emptyset$.

Let $p$ be a prime number.  We define a matrix of polynomials with rows and columns indexed by $U$: Let $A(\Lambda) = [A_{uv}(\Lambda)]_{u,v\in U}$, where
\begin{equation}
 A_{uv}(\Lambda) = \sum_{\substack{e_1+\cdots+e_N=p-1\\ \sum_{k=1}^N e_k{\bf a}_k = pu-v}} \frac{(p-1)!}{e_1!\cdots e_N!} \Lambda_1^{e_1}\cdots \Lambda_N^{e_N}\in {\mathbb Z}[\Lambda_1,\dots,\Lambda_N], 
\end{equation}
i.~e., $A_{uv}(\Lambda)$ is the coefficient of $x^{pu-v}$ in $f_\Lambda(x)^{p-1}$.  Let $\bar{A}_{uv}(\Lambda)\in{\mathbb F}_p[\Lambda]$ be the reduction mod $p$ of $A_{uv}(\Lambda)$ and put $\bar{A}(\Lambda) = 
[\bar{A}_{uv}(\Lambda)]_{u,v\in U}$.  

Let ${\mathbb F}_q$ be the finite field of $q$ elements, $q=p^a$, and let $\lambda = (\lambda_1,\dots,\lambda_N)\in{\mathbb F}_q^N$.  By N. Katz\cite[Algorithm 2.3.7.14]{Ka}, the matrix $\bar{A}(\lambda)$ is the Hasse-Witt matrix (relative to a certain basis) of the hypersurface in ${\mathbb P}^n_{{\mathbb F}_q}$ defined by the equation $f_\lambda(x) = 0$, where
\[ f_\lambda(x_0,\dots,x_n) = \sum_{k=1}^N \lambda_kx^{{\bf a}_k}\in{\mathbb F}_q[x_0,\dots,x_n]. \]
Our main result is the following statement.
\begin{theorem}
If $U\subseteq\{{\bf a}_k\}_{k=1}^N$, then $\det\bar{A}(\Lambda)\in{\mathbb F}_p[\Lambda]$ is not the zero polynomial.
\end{theorem}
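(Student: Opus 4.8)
The plan is to exhibit a single monomial in the variables $\Lambda_1,\dots,\Lambda_N$ that occurs in $\det\bar A(\Lambda)$ with a coefficient that is nonzero modulo $p$; this immediately shows the determinant is not the zero polynomial. The hypothesis $U\subseteq\{\mathbf a_k\}$ is exactly what is needed to name such a monomial: for each $u\in U$ choose the index $k(u)$ with $\mathbf a_{k(u)}=u$ (the $\mathbf a_k$ being distinct, $u\mapsto k(u)$ is injective), and set $M=\prod_{u\in U}\Lambda_{k(u)}^{\,p-1}$. The diagonal entry $A_{uu}$ is the coefficient of $x^{(p-1)u}$ in $f_\Lambda^{p-1}$, and taking $e_{k(u)}=p-1$ (all other $e_j=0$) shows it contains $\Lambda_{k(u)}^{p-1}$ with multinomial coefficient $1$. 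Hence the identity permutation contributes $M$ to $\det\bar A$ with coefficient $\mathrm{sgn}(\mathrm{id})\cdot 1=1$.

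First I would organize the full contribution of $M$ to the determinant. Expanding $\det\bar A=\sum_\sigma\mathrm{sgn}(\sigma)\prod_u A_{u,\sigma(u)}$ and selecting one monomial from each factor, a contribution to $M$ is recorded by a permutation $\sigma$ of $U$ together with exponent vectors $\mathbf e^{(u)}$. Since $M$ involves only the variables $\Lambda_{k(w)}$, every other exponent must vanish, so the data reduce to a nonnegative integer matrix $F=(f^{(u)}_w)_{u,w\in U}$ with $f^{(u)}_w=e^{(u)}_{k(w)}$. The constraint $\sum_j e^{(u)}_j=p-1$ says the rows of $F$ sum to $p-1$; matching the exponent of each $\Lambda_{k(w)}$ in $M$ says the columns of $F$ sum to $p-1$; and the degree condition $\sum_j e^{(u)}_j\mathbf a_j=pu-\sigma(u)$ becomes $\sigma(u)=pu-\sum_w f^{(u)}_w\,w$ for every $u$.

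The crux --- and the step I expect to be the main obstacle --- is proving that these constraints force $\sigma=\mathrm{id}$ and $F=(p-1)I$, so that the diagonal term is the \emph{only} source of $M$. Here I would use convexity. Rewriting the degree condition as $u=\frac1p\big(\sigma(u)+\sum_w f^{(u)}_w\,w\big)$ exhibits $u$ as a convex combination (the weights sum to $(1+(p-1))/p=1$) of points of $U$. Applying the strictly convex function $\Phi(x)=\sum_i x_i^2$ and Jensen's inequality gives, for each $u$,
\[ \Phi(u)\le \tfrac1p\Big(\Phi(\sigma(u))+\sum_w f^{(u)}_w\,\Phi(w)\Big), \]
with equality iff every point appearing with positive weight equals $u$. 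Summing over $u\in U$ and using that $\sigma$ is a permutation (so $\sum_u\Phi(\sigma(u))=\sum_u\Phi(u)$) and that the columns of $F$ sum to $p-1$ (so $\sum_u\sum_w f^{(u)}_w\Phi(w)=(p-1)\sum_w\Phi(w)$), the right-hand side collapses to $\sum_u\Phi(u)$. Thus the summed inequality is an equality, forcing equality in every individual Jensen estimate; hence $\sigma(u)=u$ and $f^{(u)}_w=(p-1)\delta_{uw}$. Therefore $M$ occurs in $\det\bar A(\Lambda)$ with coefficient exactly $1$, which is nonzero in $\mathbb F_p$, and the theorem follows.
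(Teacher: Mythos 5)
Your proof is correct, and it isolates exactly the same monomial as the paper: the coefficient of $\prod_{u\in U}\Lambda_{k(u)}^{p-1}$ in $\det\bar A(\Lambda)$ is what the paper computes after rescaling $A(\Lambda)$ to $B(\Lambda)$ and extracting the constant term of $\det B(\Lambda)$ (Propositions 2.9 and 2.11). Both arguments reduce to the same uniqueness claim --- the only data $(\sigma,\{\mathbf{e}^{(u)}\})$ producing that monomial is the identity permutation with $e_{k(u)}=p-1$ --- and both start from the same convex-combination identity $u=\frac1p\big(\sigma(u)+\sum_w f^{(u)}_w\,w\big)$. Where you genuinely diverge is in how the uniqueness is forced. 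The paper argues by extreme points: a nonzero contribution would exhibit some $\mathbf{a}_i^+$ as a convex combination of other points of the configuration (Lemma 2.16), so one picks a vertex of the convex hull of the points involved and reaches a coordinatewise contradiction with $\sum_i l^{(i)}=\mathbf{0}$. You instead apply Jensen's inequality for the strictly convex $\Phi(x)=\sum_i x_i^2$ to each row and sum, using that $\sigma$ is a permutation and that the columns of $F$ sum to $p-1$ to collapse the right-hand side; equality then propagates back to every row, a global averaging (variance) argument rather than a local one. The paper's formulation has the advantage of being a standalone statement about the relation lattices $L_i$, which is the language of the $A$-hypergeometric series in Section 3; your version is arguably more self-contained for the determinant statement itself, since the doubly-stochastic structure of the contribution does all the work in one stroke. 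Two cosmetic remarks: you reuse the letter $M$ for your monomial while the paper uses $M$ for $|U|$, so a different symbol would avoid a clash; and your observation that the target monomial forces every $e^{(u)}_j$ with $j\notin\{k(w):w\in U\}$ to vanish is the step that replaces the paper's reduction to the first $M$ indices, and is stated correctly.
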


To fix ideas, we suppose that $U=\{{\bf a}_k\}_{k=1}^{M}$, $M\leq N$.  For notational convenience we set
\[ {\bf a}_k^+ = (a_{0k},\dots,a_{nk},1)\in{\mathbb N}^{n+2},\quad k=1,\dots,N. \]
Using (2.2), we can then write $A(\Lambda) = [A_{ij}(\Lambda)]_{i,j=1}^M$ with
\begin{equation}
A_{ij}(\Lambda) = \sum_{\sum_{k=1}^N e_k{\bf a}_k^+ = p{\bf a}_i^+-{\bf a}_j^+} \frac{(p-1)!}{e_1!\cdots e_N!} \Lambda_1^{e_1}\cdots \Lambda_N^{e_N}.
\end{equation}
We consider the related matrix $B(\Lambda) =  [B_{ij}(\Lambda)]_{i,j=1}^M$ defined by
\begin{equation}
B_{ij}(\Lambda) = \Lambda_i^{-p}\Lambda_j A_{ij}(\Lambda)\in {\mathbb Z}[\Lambda_1,\dots,\Lambda_{i-1},\Lambda_i^{-1},\Lambda_{i+1},\dots,\Lambda_N],
\end{equation}
i.~e., for all $i,j=1,\dots,M$ we multiply row $i$ of $A(\Lambda)$ by $\Lambda_i^{-p}$ and column $j$ of $A(\Lambda)$ by $\Lambda_j$.  This implies that $\det \bar{B}(\Lambda) = \big(\prod_{k=1}^M \Lambda_k^{-(p-1)}\big)\det \bar{A}(\Lambda)$, hence to prove Theorem 2.3 it suffices to prove that
\begin{equation}
\det \bar{B}(\Lambda)\neq 0.
\end{equation}

We prove (2.6) by studying the monomials that appear in each $B_{ij}(\Lambda)$.  For each $i=1,\dots,M$, put
\[ L_i = \bigg\{ l=(l_1,\dots,l_N)\in{\mathbb Z}^N\mid \text{$\sum_{k=1}^N l_k{\bf a}_k^+ = {\bf 0}$, $l_i\leq 0$, and $l_k\geq 0$ for $k\neq i$}\bigg\}. \]
An easy calculation from (2.4) and (2.5) establishes the following result.
\begin{lemma}
If the monomial $\Lambda^l$ appears in $B_{ij}(\Lambda)$, then $l\in L_i$.
\end{lemma}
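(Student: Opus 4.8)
The plan is to trace explicitly how a monomial of $B_{ij}(\Lambda)$ is produced from the multinomial expansion in (2.4), and then to verify the three defining conditions of $L_i$ one at a time. First I would fix a multi-index $e = (e_1,\dots,e_N) \in \mathbb{N}^N$ contributing to the sum defining $A_{ij}(\Lambda)$, so that $\sum_{k=1}^N e_k \mathbf{a}_k^+ = p\mathbf{a}_i^+ - \mathbf{a}_j^+$. The key numerical consequence I want is the last coordinate of this vector identity: because the final entry of every $\mathbf{a}_k^+$ equals $1$, it reads $\sum_{k=1}^N e_k = p-1$ (this is the degree constraint from (2.2), now packaged into the ``$+$'' coordinate). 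By (2.5), multiplying $\Lambda^e$ by $\Lambda_i^{-p}\Lambda_j$ yields the monomial $\Lambda^l$ whose exponent $l$ is obtained from $e$ by subtracting $p$ from the $i$-th entry and adding $1$ to the $j$-th entry.

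Next I would check that $l \in L_i$. The linear relation is immediate from the shift: $\sum_{k=1}^N l_k \mathbf{a}_k^+ = \sum_{k=1}^N e_k \mathbf{a}_k^+ - p\mathbf{a}_i^+ + \mathbf{a}_j^+ = (p\mathbf{a}_i^+ - \mathbf{a}_j^+) - p\mathbf{a}_i^+ + \mathbf{a}_j^+ = \mathbf{0}$. For the sign conditions I would separate the cases $k \neq i$ and $k = i$. When $k \neq i$ the subtraction of $p$ does not act, so $l_k$ equals $e_k$ or $e_k + 1$, and both are $\geq 0$ since $e_k \geq 0$. When $k = i$, we have $l_i = e_i - p$ if $j \neq i$ and $l_i = e_i - p + 1$ if $j = i$; in both cases the bound $e_i \leq \sum_{k=1}^N e_k = p-1$ forces $l_i \leq 0$.

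There is no real obstacle here---the statement is a bookkeeping verification---so the only points demanding care are the sign of the $i$-th coordinate and the role of the homogenizing ``$+$'' entry. The delicate case is $i = j$, where the conclusion $l_i \leq 0$ is tight (equality is allowed) rather than strict; this is exactly where the relation $\sum_{k=1}^N e_k = p-1$ extracted from the last coordinate is needed. I expect the cleanest write-up to combine the two sign checks with the linear relation so that all three conditions of $L_i$ drop out of the single identity for $l$, emphasizing that the ``$+$'' padding in (2.4) is what lets the monomial-matching and degree constraints of (2.2) be handled simultaneously.
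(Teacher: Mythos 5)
Your proposal is correct and is exactly the "easy calculation from (2.4) and (2.5)" that the paper invokes without writing out: the exponent shift $l = e - p\epsilon_i + \epsilon_j$, the vanishing of $\sum_k l_k{\bf a}_k^+$, and the sign checks (using $e_i \le \sum_k e_k = p-1$ from the last coordinate) are all as intended. No differences to report.
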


\begin{lemma}
The Laurent polynomial $B_{ij}(\Lambda)$ has no constant term if $i\neq j$ and has constant term equal to $1$ if $i=j$.
\end{lemma}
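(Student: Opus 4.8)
The plan is to extract both constant terms directly from the definitions (2.4) and (2.5), since the passage from $A_{ij}$ to $B_{ij}$ is merely a shift of exponents. Writing ${\bf e}_1,\dots,{\bf e}_N$ for the standard basis of ${\mathbb Z}^N$, the multiplication by $\Lambda_i^{-p}\Lambda_j$ in (2.5) sends the term $\frac{(p-1)!}{e_1!\cdots e_N!}\Lambda^e$ of (2.4) to $\frac{(p-1)!}{e_1!\cdots e_N!}\Lambda^{l}$ with $l=e-p{\bf e}_i+{\bf e}_j$. Because $e\mapsto l=e-p{\bf e}_i+{\bf e}_j$ is a bijection of ${\mathbb Z}^N$, each exponent $l$ occurring in $B_{ij}$ comes from exactly one term of the sum (2.4); in particular the constant term of $B_{ij}$ (the coefficient of $l={\bf 0}$) is the coefficient in (2.4) of the unique vector $e=p{\bf e}_i-{\bf e}_j$, and there is no possibility of cancellation among several terms within a single entry.

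I would then split into the two cases. For $i\neq j$ the candidate vector $e=p{\bf e}_i-{\bf e}_j$ has $j$-th coordinate $e_j=-1<0$, so it does not lie in ${\mathbb N}^N$ and hence is not among the summation indices of (2.4); therefore $B_{ij}$ has no constant term. For $i=j$ the candidate is $e=(p-1){\bf e}_i$, that is, $e_i=p-1$ and $e_k=0$ for $k\neq i$. Here I would check that this $e$ does satisfy the constraint of (2.4), since $\sum_{k=1}^N e_k{\bf a}_k^+=(p-1){\bf a}_i^+=p{\bf a}_i^+-{\bf a}_i^+$, and that its multinomial coefficient is $(p-1)!/(p-1)!=1$. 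Consequently the constant term of $B_{ii}$ equals $1$, as claimed.

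I do not expect a genuine obstacle: the proof is a bookkeeping calculation, and the only point that must be stated with care is the injectivity of the exponent shift $e\mapsto e-p{\bf e}_i+{\bf e}_j$, which is what guarantees that a single term of (2.4) is responsible for the constant term and rules out accidental cancellation. The role of this lemma, together with Lemma 2.7, is to isolate the contribution of the \emph{identity} permutation to the constant term of $\det\bar B(\Lambda)$: Lemma 2.7 confines the exponents appearing in $B_{ij}$ to the cone $L_i$, and the vanishing and normalization recorded here should force the constant monomial of $\det\bar B(\Lambda)$ to arise only from $\prod_{i=1}^M B_{ii}$, with coefficient $1$, thereby yielding (2.6).
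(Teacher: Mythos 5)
Your proof is correct and follows essentially the same route as the paper: for $i\neq j$ the exponent shift by $\Lambda_i^{-p}\Lambda_j$ leaves every term with a strictly positive power of $\Lambda_j$ (equivalently, the unique candidate exponent $e=p{\bf e}_i-{\bf e}_j$ has $e_j=-1$ and so is not a valid index), and for $i=j$ the term $e_i=p-1$, $e_k=0$ for $k\neq i$ contributes the constant term $(p-1)!/(p-1)!=1$. Your explicit remark that the exponent shift is a bijection, so no cancellation can occur, is a nice touch of care that the paper leaves implicit.
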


\begin{proof}
If $i\neq j$, then by (2.5) all terms of $B_{ij}(\Lambda)$ contain the factor $\Lambda_j$.  If $i=j$, then taking $e_i=p-1$, $e_k=0$ for $k\neq i$, in (2.4) shows that the monomial $\Lambda_i^{p-1}$ appears in $A_{ii}(\Lambda)$.  This implies by~(2.5) that $B_{ii}(\Lambda)$ has constant term equal to~1.
\end{proof}

The following observation is the key to proving (2.6).
\begin{proposition}
Let $l^{(i)} = (l^{(i)}_1,\dots,l^{(i)}_N)\in L_i$ for $i=1,\dots,M$.  One has
\[ \sum_{i=1}^M l^{(i)} = {\bf 0} \]
if and only if $l^{(i)}={\bf 0}$ for all $i$.
\end{proposition}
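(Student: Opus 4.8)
The nontrivial direction is ``only if'': assuming $\sum_{i=1}^M l^{(i)}={\bf 0}$, I must deduce $l^{(i)}={\bf 0}$ for every $i$. Throughout I use the two consequences of $l^{(i)}\in L_i$ obtained by reading off, respectively, the last coordinate and the first $n+1$ coordinates of the defining relation $\sum_{k=1}^N l^{(i)}_k{\bf a}_k^+={\bf 0}$, namely $\sum_{k=1}^N l^{(i)}_k=0$ and $\sum_{k=1}^N l^{(i)}_k{\bf a}_k={\bf 0}$, together with the sign pattern $l^{(i)}_i\le 0$ and $l^{(i)}_k\ge 0$ for $k\ne i$. Writing $c_i=-l^{(i)}_i\ge 0$, these say that $c_i{\bf a}_i=\sum_{k\ne i}l^{(i)}_k{\bf a}_k$ with nonnegative coefficients summing to $c_i$; thus whenever $c_i>0$, the point ${\bf a}_i$ is a convex combination of the remaining ${\bf a}_k$ having $l^{(i)}_k>0$.

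The first step is to confine the support. For each coordinate $k>M$ and each $i\in\{1,\dots,M\}$ one has $k\ne i$, so $l^{(i)}_k\ge 0$; since $\sum_{i=1}^M l^{(i)}_k=0$ by hypothesis, every such $l^{(i)}_k$ vanishes. Hence each $l^{(i)}$ is supported on the coordinates $\{1,\dots,M\}$, and its relation becomes $\sum_{k=1}^M l^{(i)}_k{\bf a}_k={\bf 0}$, an affine dependence among the points of $U$ alone. This reduction is the crucial use of $\sum_i l^{(i)}={\bf 0}$: a single $l^{(i)}\in L_i$ can perfectly well be nonzero, because ${\bf a}_i$ may be a convex combination of monomials ${\bf a}_k$ with $k>M$; it is only after pushing every dependence into $U$ that the convexity obstruction bites.

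The second step is a peeling argument. I would choose a linear functional $\phi$ on $\mathbb R^{n+1}$ taking pairwise distinct values on ${\bf a}_1,\dots,{\bf a}_M$ (possible since these points are distinct) and reorder the indices so that $\phi({\bf a}_1)>\dots>\phi({\bf a}_M)$. I then claim, by induction on $j$, that $l^{(j)}={\bf 0}$ and that column $j$ vanishes, i.e.\ $l^{(i)}_j=0$ for all $i$. Granting that columns $1,\dots,j-1$ already vanish, the relation for $l^{(j)}$ reads $\sum_{k\ge j}l^{(j)}_k{\bf a}_k={\bf 0}$ with $\sum_{k\ge j}l^{(j)}_k=0$, so that $\sum_{k>j}l^{(j)}_k=c_j$ and, applying $\phi$, $c_j\,\phi({\bf a}_j)=\sum_{k>j}l^{(j)}_k\,\phi({\bf a}_k)$. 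Since $\phi({\bf a}_k)<\phi({\bf a}_j)$ for every $k>j$ and all the coefficients are nonnegative, $c_j>0$ would yield the absurd strict inequality $c_j\phi({\bf a}_j)<c_j\phi({\bf a}_j)$; hence $c_j=0$, forcing $l^{(j)}_k=0$ for all $k>j$ as well, so $l^{(j)}={\bf 0}$. Finally $l^{(j)}_j=0$ together with $\sum_i l^{(i)}_j=0$ and $l^{(i)}_j\ge 0$ for $i\ne j$ clears column $j$, completing the induction; at $j=M$ every $l^{(i)}$ is zero.

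The main obstacle is conceptual rather than computational: one must recognize that the sign conditions defining $L_i$ encode the geometric statement ``${\bf a}_i$ lies in the convex hull of the other active monomials,'' which a $\phi$-maximal monomial cannot satisfy. The hypothesis $\sum_i l^{(i)}={\bf 0}$ then enters in two distinct ways—once globally, to eliminate the coordinates $k>M$ and trap each dependence inside $U$, and once at each stage, to clear the column just handled—and it is the interlocking of these facts with the linear-functional ordering that drives the argument to its conclusion.
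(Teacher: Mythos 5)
Your proof is correct. It rests on the same geometric fact as the paper's argument: the relation $\sum_{k}l^{(i)}_k{\bf a}_k^+={\bf 0}$ together with the sign pattern defining $L_i$ exhibits ${\bf a}_i^+$ as a convex combination of the ${\bf a}_k^+$ with $l^{(i)}_k>0$, which an extremal point cannot be. The packaging, however, differs. The paper forms the set $C$ of all ``active'' points ${\bf a}_k^+$ (those with $l^{(i)}_k\neq 0$ for some $i$), picks a single vertex ${\bf a}_{k_0}^+$ of the convex hull of $C$, and gets an immediate contradiction from $\sum_i l^{(i)}_{k_0}=0$: that coordinate is nonnegative in every $l^{(i)}$ (for $i\neq k_0$ by definition of $L_i$, and for $i=k_0$ because the vertex property forces $l^{(k_0)}={\bf 0}$) yet positive in at least one. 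You instead first use the hypothesis to kill all coordinates $k>M$, then order $U$ by a generic separating linear functional and peel columns off inductively from the $\phi$-maximal point downward. Your preliminary reduction and the induction are not needed in the paper's version---the single vertex argument handles $k_0\leq M$ and $k_0>M$ uniformly in one stroke---but your maximizer-of-a-linear-functional formulation is a slightly more elementary substitute for ``vertex of the convex hull,'' and both proofs exploit the hypothesis $\sum_i l^{(i)}={\bf 0}$ in exactly the same way, namely to convert ``each entry of a column nonnegative and one positive'' into a contradiction.
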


Before proving Proposition 2.9, we explain how it implies (2.6).  The Laurent polynomial $\det B(\Lambda)$ is a sum of terms of the form
\begin{equation}
\pm B_{1\sigma(1)}(\Lambda)\cdots B_{M\sigma(M)}(\Lambda),
\end{equation}
where $\sigma$ is a permutation on $M$ objects.  By Lemma 2.7, each monomial in this product is of the form
\[ x^{\sum_{i=1}^M l^{(i)}},\quad l^{(i)}\in L_i\text{ for $i=1,\dots,M$.} \]
By Proposition 2.9, this equals 1 if and only if $l^{(i)}={\bf 0}$ for $i=1,\dots,M$, i.~e., the product (2.10) has a nonzero constant term if and only if each factor $B_{i\sigma(i)}(\Lambda)$ has a nonzero constant term, in which case the constant term of (2.10) is (up to sign) the product of the constant terms of the $B_{i\sigma(i)}(\Lambda)$.  By Lemma 2.8, this occurs only when $\sigma$ is the identity permutation, in which case each factor has constant term equal to~1.  We thus have the following more precise version of (2.6).
\begin{proposition}
The Laurent polynomials $\det B(\Lambda)$ and $\det\bar{B}(\Lambda)$ have constant term equal to $1$.
\end{proposition}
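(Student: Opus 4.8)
The plan is to read off the constant term of $\det B(\Lambda)$ directly from the permutation expansion (2.10), using Lemmas 2.7 and 2.8 together with Proposition 2.9 to show that only the identity permutation can contribute to it. Since the argument is entirely about which monomials can equal $1$, no estimates or limiting processes are needed; everything reduces to bookkeeping on the exponent vectors, provided the three earlier results are applied in the right order.

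First I would fix a permutation $\sigma$ and examine the product $B_{1\sigma(1)}(\Lambda)\cdots B_{M\sigma(M)}(\Lambda)$ appearing in (2.10). A monomial in this product is obtained by selecting one monomial $\Lambda^{l^{(i)}}$ from each factor $B_{i\sigma(i)}(\Lambda)$, and by Lemma 2.7 every such $l^{(i)}$ lies in $L_i$; the selected monomials multiply to $\Lambda^{\sum_{i=1}^M l^{(i)}}$. The second step is to ask when this is the constant term, i.e.\ when $\sum_{i=1}^M l^{(i)} = {\bf 0}$. Here Proposition 2.9 is decisive: the single vector equation $\sum_{i=1}^M l^{(i)} = {\bf 0}$ forces $l^{(i)} = {\bf 0}$ for every $i$, so each factor must contribute its own constant term. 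Invoking Lemma 2.8, a factor $B_{i\sigma(i)}(\Lambda)$ has a nonzero constant term only when $\sigma(i) = i$, and so a contribution to the overall constant term can arise only from the identity permutation.

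For $\sigma$ equal to the identity, the product is $\prod_{i=1}^M B_{ii}(\Lambda)$, and Lemma 2.8 gives each $B_{ii}(\Lambda)$ constant term $1$, whence the product has constant term $1$; since the identity permutation carries the sign $+1$ in (2.10), the constant term of $\det B(\Lambda)$ is exactly $1$. Finally, because this constant term is the integer $1$, which is a unit mod $p$, reducing mod $p$ shows that $\det\bar{B}(\Lambda)$ likewise has constant term $1$. In particular $\det\bar{B}(\Lambda)\neq 0$, which is (2.6) and hence proves Theorem 2.3.

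The step I expect to require the most care is the decoupling in the second paragraph. A priori, for a non-identity $\sigma$ one might fear that a clever combination of nonzero vectors $l^{(i)}\in L_i$ could still sum to ${\bf 0}$ and thus create an unexpected constant term; it is precisely the content of Proposition 2.9 that this cannot happen unless each $l^{(i)}$ vanishes individually. Thus the entire proof hinges on correctly applying Proposition 2.9 to convert the single constraint $\sum_{i=1}^M l^{(i)} = {\bf 0}$ into the $M$ separate constraints $l^{(i)} = {\bf 0}$; once that is in hand, Lemma 2.8 closes the argument mechanically.
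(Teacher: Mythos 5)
Your argument is correct and follows essentially the same route as the paper: expand $\det B(\Lambda)$ over permutations, use Lemma 2.7 to place the exponent vectors in the sets $L_i$, apply Proposition 2.9 to force each $l^{(i)}={\bf 0}$, and then use Lemma 2.8 to isolate the identity permutation and read off the constant term $1$, which survives reduction mod $p$. No gaps.
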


\begin{proof}[Proof of Proposition $2.9$]
The ``if'' part of Proposition 2.9 is clear, so consider a set $\{l^{(i)}\}_{i=1}^M$ with $l^{(i)}\in L_i$ for each $i$ satisfying
\begin{equation}
\sum_{i=1}^M l^{(i)} = {\bf 0}.
\end{equation}
Since $l^{(i)}\in L_i$, we have
\begin{equation}
\sum_{k=1}^N l^{(i)}_k{\bf a}_k^+={\bf 0}\quad\text{for  each $i\in\{1,\dots,M\}$,}
\end{equation}
which implies (since the last coordinate of each ${\bf a}_k^+$ equals 1)
\begin{equation}
\sum_{k=1}^N l^{(i)}_k = 0\quad\text{for each $i\in\{1,\dots,M\}$.}
\end{equation}
If $l^{(i)}\neq{\bf 0}$, then (by the definition of $L_i$ and (2.14)) $l^{(i)}_i<0$ and $l^{(i)}_k>0$ for at least one $k$, $k\in\{1,\dots,N\}$, $k\neq i$.  We can then solve Equation (2.13) for ${\bf a}_i^+$:
\begin{equation}
{\bf a}_i^+ = \sum_{\substack{k=1\\ k\neq i}}^N \bigg(-\frac{l^{(i)}_k}{l^{(i)}_i}\bigg){\bf a}_k^+.
\end{equation}
Equation (2.14) shows that the coefficients on the right-hand side of (2.15) are nonnegative rational numbers that sum to 1, so (2.15) implies the following statement:
\begin{lemma}
If $l^{(i)}\neq{\bf 0}$, then ${\bf a}_i^+$ is an interior point of the convex hull of the set $\{{\bf a}_k^+\mid l^{(i)}_k>0\}$.
\end{lemma}

Set $C=\{ {\bf a}_k^+\mid \text{$\l^{(i)}_k\neq0$ for some $i$, $i\in\{1,\dots,M\}$}\}$.  If ${\bf a}_k^+\not\in C$, then $l^{(i)}_k=0$ for all $i\in\{1,\dots,M\}$, so if $C=\emptyset$ then $l^{(i)}_k=0$ for all $k\in\{1,\dots,N\}$ and all $i\in\{1,\dots,M\}$, which establishes the proposition.   So assume $C\neq\emptyset$ and choose $k_0\in \{1,\dots,N\}$ such that ${\bf a}_{k_0}^+$ is a vertex of the convex hull of $C$.  If $k_0\in\{1,\dots,M\}$, then Lemma~2.16 implies that $l^{(k_0)}={\bf 0}$.  In particular, if $l^{(i)}\neq{\bf 0}$, then $i\neq k_0$.  But by the definition of $L_i$, the only coordinate of $l^{(i)}$ that can be negative is $l^{(i)}_i$, all other coordinates must be nonnegative, so $l^{(i)}\neq{\bf 0}$ implies that $l^{(i)}_{k_0}\geq 0$.  Furthermore, since ${\bf a}_{k_0}^+\in C$, we must have $l^{(i)}_{k_0}>0$ for some~$i$.  
But this implies that $\sum_{i=1}^M l^{(i)}_{k_0}>0$, contradicting~(2.12).
\end{proof}

\section{$A$-hypergeometric series}

We continue to assume that $U=\{{\bf a}_k\}_{k=1}^M$, $M\leq N$.  We recall the definition of the $A$-hypergeometric system of PDEs associated to the set $\{{\bf a}_k^+\}_{k=1}^N$.  Let $L\subseteq{\mathbb Z}^{N}$ be the lattice of relations on this set:
\[ L= \bigg\{l=(l_1,\dots,l_N)\in{\mathbb Z}^N\mid \sum_{k=1}^N l_k{\bf a}_k^+ = {\bf 0}\bigg\}. \]
For each $l=(l_1,\dots,l_N)\in L$, we define a partial differential operator $\Box_l$ in the variables $\{\Lambda_k\}_{k=1}^N$ by
\begin{equation}
\Box_l = \prod_{l_k>0} \bigg(\frac{\partial}{\partial\Lambda_k}\bigg)^{l_k} - \prod_{l_k<0} \bigg(\frac{\partial}{\partial\Lambda_k}\bigg)^{-l_k}.
\end{equation}
We refer to these operators as box operators.
For $\beta = (\beta_0,\beta_1,\dots,\beta_{n+1})\in{\mathbb C}^{n+2}$, the corresponding Euler (or homogeneity) operators are defined by
\begin{equation}
 Z_i = \sum_{k=1}^N a_{ik}\Lambda_k\frac{\partial}{\partial\Lambda_k} - \beta_i 
\end{equation}
for $i=0,\dots,n+1$.  The {\it $A$-hypergeometric system with parameter $\beta$\/} consists of Equations (3.1) for $l\in L$ and (3.2) for $i=0,1,\dots,n+1$.  Note that a monomial $\Lambda^l$ satisfies the Euler operators (3.2) if and only if $\sum_{k=1}^N l_k{\bf a}_k^+ = \beta$.  

In \cite{AS}, we constructed certain logarithmic solutions of the box operators.  By \cite[Theorem 4.11]{AS}, the series $\log\Lambda_i + G_i(\Lambda)$ for $i=1,\dots,M$ satisfy the box operators, where
\[ G_i(\Lambda) = \sum_{\substack{l=(l_1,\dots,l_N)\in L_i\\ l_i<0}} (-1)^{-l_i-1}\frac{(-l_i-1)!}{\displaystyle \prod_{\substack{k=1\\ k\neq i}}^N l_k!}\Lambda^l\quad\text{for $i=1,\dots,M$.} \]
For $j=1,\dots,M$ we have
\begin{multline}
\frac{\partial}{\partial\Lambda_j}\bigg(\log\Lambda_i + G_i(\Lambda)\bigg) = \\
\begin{cases} {\displaystyle 
\sum_{l=(l_1,\dots,l_N)\in L_i} (-1)^{-l_i}\frac{(-l_i)!}{\displaystyle\prod_{\substack{k=1\\ k\neq i}}^N l_k!}\Lambda_1^{l_1}\cdots\Lambda_{i-1}^{l_{i-1}}\Lambda_i^{l_i-1}\Lambda_{i+1}^{l_{i+1}}\cdots\Lambda_N^{l_N}} & \text{if $j=i$,} \\
{\displaystyle \sum_{\substack{ l=(l_1,\dots,l_N)\in L_i\\ l_j>0}} (-1)^{-l_i-1}\frac{(-l_i-1)!}{\displaystyle(l_j-1)!\prod_{\substack{k=1\\ k\neq i,j}}^N l_k!}\Lambda_1^{l_1}\cdots\Lambda_{j-1}^{l_{j-1}}\Lambda_j^{l_j-1}\Lambda_{j+1}^{l_{j+1}}\cdots\Lambda_N^{l_N}} & \text{if $j\neq i$.} \end{cases}
\end{multline}
Note that the term $1/\Lambda_i$ that results from differentiating $\log\Lambda_i$ when $j=i$ occurs as the term $l={\bf 0}$ in the first summation.  Note also that the condition $l_j>0$ in the second summation implies $l_i<0$ as well since (2.14) holds for $l\in L_i$.

\begin{proposition}
For all $i,j=1,\dots,M$, the series in $(3.3)$ are solutions of the $A$-hypergeometric system with parameter $\beta=-{\bf a}_j^+$ and have integer coefficients.
\end{proposition}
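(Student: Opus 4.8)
The plan is to check the two families of equations defining the $A$-hypergeometric system separately: the box operators $\Box_l$ of (3.1) for $l\in L$, and the Euler operators $Z_i$ of (3.2) for $i=0,\dots,n+1$. The essential observation is that, by construction, each series in (3.3) is $\partial/\partial\Lambda_j$ applied to $H_i:=\log\Lambda_i + G_i(\Lambda)$, and by \cite[Theorem 4.11]{AS} the function $H_i$ already satisfies every box operator.

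First I would dispose of the box operators. Each $\Box_l$ is a polynomial in the operators $\partial/\partial\Lambda_1,\dots,\partial/\partial\Lambda_N$ with constant coefficients, so it commutes with $\partial/\partial\Lambda_j$. Working termwise on the explicit series, one then has $\Box_l(\partial H_i/\partial\Lambda_j) = \partial(\Box_l H_i)/\partial\Lambda_j = 0$ for every $l\in L$, so each series in (3.3) is annihilated by all the box operators. The presence of $\log\Lambda_i$ in $H_i$ causes no difficulty, since these are purely formal manipulations and $\partial(\log\Lambda_i)/\partial\Lambda_i=\Lambda_i^{-1}$ is already accounted for as the $l={\bf 0}$ term of the first sum in (3.3).

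Next I would treat the Euler operators, using the remark after (3.2) that a monomial $\Lambda^m$ is annihilated by all the $Z_i$ with parameter $\beta$ precisely when $\sum_{k=1}^N m_k{\bf a}_k^+ = \beta$. I would verify this condition monomial by monomial. A monomial in (3.3) coming from $l\in L_i$ has exponent vector $m$ obtained from $l$ by lowering the $j$th coordinate by one (since we differentiate in $\Lambda_j$); thus $m=l-e_i$ when $j=i$ and $m=l-e_j$ when $j\neq i$, where $e_k$ denotes the $k$th standard basis vector of ${\mathbb Z}^N$. In both cases $\sum_k l_k{\bf a}_k^+={\bf 0}$ (valid since $l\in L_i\subseteq L$) gives $\sum_k m_k{\bf a}_k^+=-{\bf a}_j^+$. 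Hence every monomial, and therefore the whole series, satisfies the Euler operators with $\beta=-{\bf a}_j^+$.

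Finally, for integrality I would rewrite each coefficient as a multinomial coefficient, which is where (2.14) does the real work. For $l\in L_i$, Equation (2.14) yields $\sum_{k\neq i} l_k=-l_i$, with all the $l_k$ ($k\neq i$) nonnegative and $-l_i\geq 0$. When $j=i$ the coefficient is $\pm(-l_i)!/\prod_{k\neq i} l_k!$; since its top argument $-l_i$ equals the sum $\sum_{k\neq i} l_k$ of the bottom arguments, this is $\pm1$ times a multinomial coefficient, hence an integer. When $j\neq i$ the remark after (3.3) gives $l_i<0$, so $-l_i-1\geq 0$, and the coefficient is $\pm(-l_i-1)!/\big((l_j-1)!\prod_{k\neq i,j} l_k!\big)$; since $(l_j-1)+\sum_{k\neq i,j} l_k=-l_i-1$, this too is a multinomial coefficient up to sign, hence an integer. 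The main obstacle is exactly this last step: a quotient of factorials need not be an integer in general, and the argument goes through only because (2.14) forces the numerator's factorial argument to equal the sum of the denominator's, turning each coefficient into a genuine multinomial coefficient.
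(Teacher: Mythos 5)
Your proof is correct and follows essentially the same route as the paper's: commuting $\partial/\partial\Lambda_j$ past the box operators, verifying the Euler condition monomial by monomial via $\sum_k l_k{\bf a}_k^+={\bf 0}$, and using (2.14) to recognize each coefficient as a multinomial coefficient. You simply spell out the two cases $j=i$ and $j\neq i$ in more detail than the paper does.
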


\begin{proof}
Since $\partial/\partial\Lambda_j$ commutes with the box operators, it follows that these series satisfy the box operators.  Every monomial appearing in the series has exponent of the form $(l_1,\dots,l_{j-1},l_j-1,l_{j+1},\dots,l_N)$, so (2.13) implies that
\begin{equation}
(l_j-1){\bf a}_j^+  + \sum_{\substack{k=1\\ k\neq j}}^N l_k{\bf a}_k^+ = -{\bf a}_j^+,
\end{equation}
which implies that each monomial in the series satisfies the Euler operators with $\beta = -{\bf a}_j^+$.  By (2.14) we have
\[ -l_i = \sum_{\substack{k=1\\ k\neq i}}^N l_k, \]
which implies that each coefficient is (up to sign) a multinomial coefficient.
\end{proof}

We consider certain truncations of these series.  For each $r=(r_1,\dots,r_N)\in{\mathbb Z}^N$, we define a truncation operator ${\rm Trunc}_r$ on formal series by the formula
\begin{equation}
{\rm Trunc}_r\bigg( \sum_{s=(s_1,\dots,s_N)\in{\mathbb Z}^N} c_s\Lambda^s\bigg) = \sum_{\substack{pr_k\leq s_k<p(r_k+1)\\ \text{for $k=1,\dots,N$}}} c_s\Lambda^s.
\end{equation}

\begin{lemma}
If $F(\Lambda) = \sum_{s\in{\mathbb Z}^N} c_s\Lambda^s\in{\mathbb Z}[[\Lambda_1^{\pm 1},\dots,\Lambda_N^{\pm 1}]]$ satisfies the $A$-hyper\-geometric system for some parameter $\beta\in{\mathbb Z}^{n+2}$, then every truncation ${\rm Trunc}_r\big(F(\Lambda)\big)$ is a mod $p$ solution of that system.
\end{lemma}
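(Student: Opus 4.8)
The plan is to verify separately that $\mathrm{Trunc}_r(F)$ satisfies the Euler operators and the box operators modulo $p$. For the Euler operators this is immediate, and in fact holds exactly: since $Z_i$ is diagonal on monomials, acting on $\Lambda^s$ by the scalar $\sum_{k=1}^N a_{ik}s_k - \beta_i$, the hypothesis $Z_iF = 0$ forces every monomial $\Lambda^s$ occurring in $F$ to satisfy $\sum_{k=1}^N a_{ik}s_k = \beta_i$. As $\mathrm{Trunc}_r(F)$ is obtained from $F$ by discarding some monomials, each surviving monomial still satisfies this relation, so $Z_i\big(\mathrm{Trunc}_r(F)\big) = 0$ on the nose.

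The substance is the box operators, which I would handle coefficient by coefficient. Fix $l\in L$ and write $l_k^+ = \max(l_k,0)$, $l_k^- = \max(-l_k,0)$, so that $\Box_l\Lambda^s = \big(\prod_k (s_k)_{l_k^+}\big)\Lambda^{s-l^+} - \big(\prod_k (s_k)_{l_k^-}\big)\Lambda^{s-l^-}$, where $(s_k)_m = s_k(s_k-1)\cdots(s_k-m+1)$. For a fixed target exponent $t$, only two source monomials contribute to the coefficient of $\Lambda^t$ in $\Box_lF$: the monomial $s_+ := t+l^+$ (weight $W_+ := \prod_{l_k>0}(s_{+,k})_{l_k}$, sign $+$) and the monomial $s_- := t+l^-$ (weight $W_- := \prod_{l_k<0}(s_{-,k})_{-l_k}$, sign $-$). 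Since $\Box_lF = 0$, that coefficient vanishes, giving the exact identity $c_{s_+}W_+ = c_{s_-}W_-$ over $\mathbb Z$; call this common value $V_t$. Note $s_+ - s_- = l$, so $s_{+,k} = s_{-,k}$ whenever $l_k = 0$.

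The coefficient of $\Lambda^t$ in $\Box_l\big(\mathrm{Trunc}_r(F)\big)$ equals $[s_+\in T]\,c_{s_+}W_+ - [s_-\in T]\,c_{s_-}W_-$, where $T = \{s : pr_k\le s_k < p(r_k+1)\text{ for all }k\}$ is the truncation window and $[\,\cdot\,]$ is $0/1$. If both of $s_\pm$ lie in $T$, or neither does, this is $0$ exactly (by the identity $c_{s_+}W_+ = c_{s_-}W_-$, respectively trivially). The only remaining case is when exactly one of $s_\pm$ lies in $T$, and here I claim $V_t\equiv 0\pmod p$. This rests on the \emph{elementary observation} that a product of consecutive integers $b(b-1)\cdots(a+1)$ is divisible by $p$ as soon as $a$ and $b$ lie in different residue blocks, i.e.\ $\lfloor a/p\rfloor\neq\lfloor b/p\rfloor$, since then the multiple of $p$ given by $p(\lfloor a/p\rfloor+1)$ lies in $\{a+1,\dots,b\}$. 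Concretely, the $k$-th factor of $W_+$ (for $l_k>0$) is the product of the integers from $s_{-,k}+1$ to $s_{+,k}$, and the $k$-th factor of $W_-$ (for $l_k<0$) is the product from $s_{+,k}+1$ to $s_{-,k}$.

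Suppose, say, $s_+\in T$ but $s_-\notin T$. Then some coordinate $k_0$ has $s_{+,k_0}$ in block $r_{k_0}$ but $s_{-,k_0}$ in a different block, so $\lfloor s_{+,k_0}/p\rfloor\neq\lfloor s_{-,k_0}/p\rfloor$ and necessarily $l_{k_0}\neq 0$. If $l_{k_0}>0$ the $k_0$-factor of $W_+$ is divisible by $p$, whence $V_t = c_{s_+}W_+\equiv 0$; if $l_{k_0}<0$ the $k_0$-factor of $W_-$ is divisible by $p$, whence $V_t = c_{s_-}W_-\equiv 0$. The case $s_-\in T$, $s_+\notin T$ is identical. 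Hence every coefficient of $\Box_l\big(\mathrm{Trunc}_r(F)\big)$ is divisible by $p$, so $\mathrm{Trunc}_r(F)$ satisfies the box operators mod $p$; together with the Euler operators this proves the lemma. The one genuinely non-formal step is the divisibility claim in the last case; everything else is bookkeeping with the exact identity $c_{s_+}W_+ = c_{s_-}W_-$ inherited from $F$.
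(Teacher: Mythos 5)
Your argument is correct, but it is organized differently from the paper's. The paper's proof of the box-operator part rests on a single commutation identity: for every variable and every $r$,
$\frac{\partial}{\partial\Lambda_i}\bigl({\rm Trunc}_r(F)\bigr)\equiv {\rm Trunc}_r\bigl(\frac{\partial}{\partial\Lambda_i}(F)\bigr)\pmod{p}$,
which holds because the two sides differ only in boundary terms whose numerical coefficients are the multiples $pr_i$ and $p(r_i+1)$ of $p$. Iterating this gives $\Box_l\bigl({\rm Trunc}_r(F)\bigr)\equiv{\rm Trunc}_r\bigl(\Box_l F\bigr)=0\pmod{p}$ at once; the Euler-operator part is handled exactly as you handle it. You instead expand $\Box_l\bigl({\rm Trunc}_r(F)\bigr)$ coefficient by coefficient, use the exact two-term relation $c_{s_+}W_+=c_{s_-}W_-$ inherited from $\Box_lF=0$, and observe that when exactly one of $s_+$, $s_-$ survives the truncation, one of the falling-factorial weights is a product of consecutive integers straddling a block boundary and hence is divisible by $p$. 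The two proofs turn on the same elementary fact---a run of consecutive integers crossing a multiple of $p$ contains one---but the paper packages it as an operator identity that can be stated once and iterated, making the deduction a one-liner, whereas your version stays at the level of individual coefficients and makes explicit which terms of $\Box_l({\rm Trunc}_r F)$ could be nonzero and why each vanishes mod $p$. Both are complete; the paper's is shorter and more reusable, yours is more self-contained and arguably more transparent about where the prime $p$ enters.
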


\begin{proof}
The Euler operators map each monomial to a multiple of itself, so if $F(\Lambda)$ satisfies the Euler operators then any truncation of $F(\Lambda)$ also satisfies the Euler operators.  An easy calculation shows that differentiation with respect to any variable and ${\rm Trunc}_r$ for any $r\in{\mathbb Z}^N$ commute modulo $p$:
\[ \frac{\partial}{\partial\Lambda_i}\bigg({\rm Trunc}_r\big(F(\Lambda)\big)\bigg)\equiv {\rm Trunc}_r\bigg(\frac{\partial}{\partial\Lambda_i}\big(F(\Lambda)\big)\bigg) \pmod{p}. \]
It follows that if $F(\Lambda)$ satisfies the box operators, then ${\rm Trunc}_r\big(F(\Lambda)\big)$ satisfies the box operators modulo $p$.
\end{proof}

To make the connection with the Hasse-Witt matrix, we consider the truncation operator ${\rm Trunc}_{\rho^{(i)}}$ for $i=1,\dots,M$, where $\rho^{(i)} = (0,\dots,0,-1,0,\dots,0)$ and the ``-1'' occurs in the $i$-th coordinate.
\begin{proposition}
For $i,j=1,\dots,M$ we have
\[ A_{ij}(\Lambda)\equiv -\Lambda_i^p\,{\rm Trunc}_{\rho^{(i)}}\bigg(\frac{\partial}{\partial\Lambda_j}\bigg(\log\Lambda_i + G_i(\Lambda)\bigg)\bigg) \pmod{p}. \]
\end{proposition}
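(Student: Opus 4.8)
The plan is to prove the congruence monomial-by-monomial. Both sides are finite combinations of monomials $\Lambda^e$, so it suffices to compare, for each exponent vector $e$, the coefficient of $\Lambda^e$ on the two sides modulo $p$. I would organize this in two stages: first match the supports of the two sides via an explicit bijection on exponent vectors, and then match the individual coefficients using Wilson's theorem.

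For the supports, recall from $(2.4)$ that the monomials $\Lambda^e$ occurring in $A_{ij}(\Lambda)$ are exactly those with $e_k\geq 0$, $\sum_k e_k=p-1$, and $\sum_k e_k{\bf a}_k^+=p{\bf a}_i^+-{\bf a}_j^+$. On the other side, a monomial of $\partial(\log\Lambda_i+G_i)/\partial\Lambda_j$ indexed by $l\in L_i$ (with $l_j>0$ when $j\neq i$) has exponent $l$ with its $j$-th coordinate lowered by one from the differentiation; the operator ${\rm Trunc}_{\rho^{(i)}}$ retains precisely those monomials whose $\Lambda_i$-exponent lies in $[-p,-1]$ and whose remaining exponents lie in $[0,p-1]$, and the prefactor $\Lambda_i^p$ then raises the $\Lambda_i$-exponent into $[0,p-1]$. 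I would check that under the resulting correspondence---which adds $p$ to the $i$-th coordinate via $\Lambda_i^p$ and subtracts $1$ from the $j$-th coordinate via the differentiation---the truncation window matches exactly the ranges $e_k\in[0,p-1]$, while the identities $\sum_k e_k=p-1$ and $\sum_k e_k{\bf a}_k^+=p{\bf a}_i^+-{\bf a}_j^+$ drop out of $l\in L_i$ together with $(2.13)$ and $(2.14)$. This produces an explicit bijection between the monomials of $A_{ij}$ and the surviving monomials on the right, and the cases $j=i$ and $j\neq i$ run in parallel, differing only in whether the derivative lowers the $i$-th or the $j$-th coordinate.

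It then remains to compare coefficients under this bijection. The coefficient of $\Lambda^e$ in $A_{ij}$ is the multinomial coefficient $(p-1)!/\prod_k e_k!$, while the corresponding coefficient coming from $(3.3)$ is a signed factorial expression $(-1)^{-l_i-1}(-l_i-1)!/\big((l_j-1)!\prod_{k\neq i,j}l_k!\big)$, with the analogous expression when $j=i$. Substituting the bijection, both reduce to a common shape $\pm(p-1-e_i)!/\prod_{k\neq i}e_k!$, so the whole identity collapses to the congruence $(p-1)!/e_i!\equiv(-1)^{p-1-e_i}(p-1-e_i)!\pmod p$, equivalently $\binom{p-1}{e_i}\equiv(-1)^{e_i}$, which is immediate from Wilson's theorem. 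I expect the main obstacle to be purely bookkeeping: one must track the sign $(-1)^{-l_i-1}$ together with the normalizing factor $-\Lambda_i^p$ and confirm that they combine with the Wilson congruence to yield the asserted sign uniformly across both cases. The underlying combinatorial matching of supports is routine, so the delicate point is verifying that the sign and the $\Lambda_i^p$-shift are calibrated correctly.
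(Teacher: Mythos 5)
Your strategy is the same as the paper's: the paper likewise rewrites the truncated series as a sum over exponent vectors $e$ with $\sum_k e_k{\bf a}_k^+=-{\bf a}_j^+$, $-p\le e_i\le -1$, and $0\le e_k\le p-1$ for $k\ne i$, matches this support with that of $(2.4)$ via $e_i\mapsto e_i+p$, and then compares coefficients by a Wilson-type congruence. So the outline is fine. The difficulty is that the one step you explicitly defer---``confirm that the sign and the $\Lambda_i^p$-shift are calibrated correctly''---is the only step with real content, and when you carry it out it does \emph{not} yield the asserted sign. For $e$ with $e_k\ge 0$ and $\sum_k e_k=p-1$, the coefficient of $\Lambda^e$ in $A_{ij}(\Lambda)$ is $(p-1)!/\prod_k e_k!\equiv(-1)^{p-1-e_i}(p-1-e_i)!/\prod_{k\ne i}e_k!$ by your own Wilson congruence, whereas the coefficient of $\Lambda^e$ in $-\Lambda_i^p\,{\rm Trunc}_{\rho^{(i)}}(\cdots)$, obtained from $(3.3)$ by substituting $l_i=e_i-p$, $l_j=e_j+1$, $l_k=e_k$ (and analogously for $j=i$), is
\[ -(-1)^{p-e_i-1}\,\frac{(p-e_i-1)!}{\prod_{k\ne i}e_k!}=(-1)^{p-e_i}\,\frac{(p-1-e_i)!}{\prod_{k\ne i}e_k!}. \]
These differ by a factor of $-1$, so for odd $p$ the displayed congruence fails as stated; what your argument actually proves is $A_{ij}(\Lambda)\equiv +\Lambda_i^p\,{\rm Trunc}_{\rho^{(i)}}(\cdots)$. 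A one-line sanity check: for $N=M=1$ and $f_\Lambda=\Lambda_1x_0^d$ one has $A_{11}=\Lambda_1^{p-1}$ and $G_1=0$, while $-\Lambda_1^p\,{\rm Trunc}_{\rho^{(1)}}(1/\Lambda_1)=-\Lambda_1^{p-1}$.

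You should not be embarrassed at having ``expected'' the sign to work out: the paper's own proof contains the same slip, hidden in the displayed congruence $(-1)^{-e_i-1}(-e_i-1)!\equiv(p+e_i)!^{-1}\pmod p$. Taking $e_i=-1$ gives $1$ on the left and $(p-1)!^{-1}\equiv -1$ on the right; the correct congruence is $(-1)^{-e_i-1}(-e_i-1)!\equiv -(p+e_i)!^{-1}$, and propagating the corrected sign through $(3.10)$ again yields $A_{ij}\equiv +\Lambda_i^p\,{\rm Trunc}_{\rho^{(i)}}(\cdots)$. This is also forced by Lemma 2.8: the term $e_i=p-1$ shows that $A_{ii}$ contains $+\Lambda_i^{p-1}$, while the $l={\bf 0}$ term of $(3.3)$ contributes $-\Lambda_i^{p-1}$ to the right-hand side as written. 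So: same method as the paper, but to turn your sketch into a proof you must actually do the sign bookkeeping, and when you do you will find you have proved the proposition with the opposite sign. The later application (Corollary 3.11) is unaffected, since a global sign is harmless there.
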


\begin{proof}
It follows from (3.3) that
\begin{equation}
{\rm Trunc}_{\rho^{(i)}}\bigg(\frac{\partial}{\partial\Lambda_j}\bigg(\log\Lambda_i + G_i(\Lambda)\bigg)\bigg) = 
{\sum}'(-1)^{-e_i-1}{\displaystyle \frac{(-e_i-1)!}{\prod_{\substack{k=1\\k\neq i}}^N e_k!}}\Lambda_1^{e_1}\cdots\Lambda_N^{e_N}
\end{equation}
where the symbol ${\sum}'$ denotes a sum over $(e_1,\dots,e_N)$ satisfying $\sum_{k=1}^N e_k{\bf a}_k^+ = -{\bf a}_j^+$, $-p\leq e_i\leq -1$, and $0\leq e_k\leq p-1$ for $k\neq i$.  Since 
\[ (-1)^{-e_i-1}(-e_i-1)!\equiv (p+e_i)!^{-1}\pmod{p}, \]
Equation (3.9) implies
\begin{equation}
{\rm Trunc}_{\rho^{(i)}}\bigg(\frac{\partial}{\partial\Lambda_j}\bigg(\log\Lambda_i + G_i(\Lambda)\bigg)\bigg) \equiv
{\sum}'{\displaystyle \frac{\Lambda_1^{e_1}\cdots\Lambda_N^{e_N}}{(p+e_i)!\prod_{\substack{k=1\\k\neq i}}^N e_k!}} \pmod{p}.
\end{equation}
We have $p+\sum_{k=1}^N e_k=p-1$ and $(p-1)!\equiv -1\pmod{p}$, so multiplying (3.10) by $(p-1)!\Lambda_i^p$ and comparing with (2.4) establishes the proposition.
\end{proof}

Combining Proposition 3.4, Lemma 3.7, and Proposition 3.8 gives the following result.
\begin{corollary}
For all $i,j=1,\dots,M$, the coefficient $\bar{A}_{ij}(\Lambda)$ of the Hasse-Witt matrix is a mod $p$ solution of the $A$-hypergeometric system with parameter $\beta = -{\bf a}_j^+$.
\end{corollary}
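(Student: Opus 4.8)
The plan is to assemble the three preceding results and then settle the one point they do not immediately handle, namely the effect of the factor $-\Lambda_i^p$ that appears in Proposition 3.8. First I would invoke Proposition 3.4 together with Lemma 3.7: the series $\frac{\partial}{\partial\Lambda_j}\big(\log\Lambda_i + G_i(\Lambda)\big)$ solves the $A$-hypergeometric system with parameter $-{\bf a}_j^+$ and has integer coefficients, so its truncation $T := {\rm Trunc}_{\rho^{(i)}}\big(\frac{\partial}{\partial\Lambda_j}(\log\Lambda_i + G_i(\Lambda))\big)$ is a mod $p$ solution of that same system. By Proposition 3.8 we have $\bar{A}_{ij}(\Lambda) \equiv -\Lambda_i^p\,T \pmod p$, so everything reduces to verifying that multiplication by $\Lambda_i^p$ carries a mod $p$ solution to a mod $p$ solution without altering the parameter modulo $p$.

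For the box operators the key is that multiplication by $\Lambda_i^p$ commutes with every $\partial/\partial\Lambda_k$ modulo $p$: for $k\neq i$ this is immediate, while for $k=i$ the product-rule correction $\partial_i(\Lambda_i^p)=p\Lambda_i^{p-1}$ carries a factor of $p$. More precisely, every iterated derivative $\partial_i^a(\Lambda_i^p)$ with $a\geq 1$ is divisible by $p$, so a Leibniz expansion shows that any constant-coefficient differential operator $\partial^\alpha=\prod_k(\partial/\partial\Lambda_k)^{\alpha_k}$ satisfies $\partial^\alpha(\Lambda_i^p\,T)\equiv \Lambda_i^p\,\partial^\alpha T\pmod p$. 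Applying this to each $\Box_l$ gives $\Box_l(\Lambda_i^p\,T)\equiv \Lambda_i^p\,\Box_l(T)\equiv 0\pmod p$, using that $T$ solves the box operators mod $p$ by Lemma 3.7.

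For the Euler operators I would argue monomial by monomial. Since the full series solves the Euler operators with parameter $-{\bf a}_j^+$ and ${\rm Trunc}_{\rho^{(i)}}$ only discards terms, every monomial $\Lambda^s$ of $T$ has exponent satisfying $\sum_{k=1}^N s_k{\bf a}_k^+ = -{\bf a}_j^+$ exactly. Multiplying by $\Lambda_i^p$ raises the $i$-th exponent by $p$, so the corresponding monomial $\Lambda^{s'}$ of $\Lambda_i^p\,T$ has exponent with $\sum_k s'_k{\bf a}_k^+ = -{\bf a}_j^+ + p{\bf a}_i^+ \equiv -{\bf a}_j^+ \pmod p$. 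Consequently the Euler operator $Z_t$ (for $t\in\{0,\dots,n+1\}$) with parameter $-{\bf a}_j^+$ acts on $\Lambda^{s'}$ as multiplication by $p\,a_{ti}\equiv 0\pmod p$, so $\bar{A}_{ij}$ solves all the Euler operators mod $p$ with the unchanged parameter $-{\bf a}_j^+$. Combining the two verifications shows that $\bar{A}_{ij}$ is a mod $p$ solution of the entire system, as claimed.

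I expect the main obstacle to be precisely the box-operator computation: one must confirm that the product-rule corrections coming from differentiating $\Lambda_i^p$ remain divisible by $p$ even under repeated differentiation, which is exactly what makes multiplication by $\Lambda_i^p$ behave like a constant modulo $p$. Once that divisibility is pinned down, the Euler-operator bookkeeping and the final assembly from Propositions 3.4 and 3.8 and Lemma 3.7 are routine.
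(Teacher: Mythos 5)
Your proposal is correct and follows the paper's own route: the paper proves the corollary simply by citing the combination of Proposition 3.4, Lemma 3.7, and Proposition 3.8, leaving the effect of the factor $-\Lambda_i^p$ implicit. Your verification that multiplication by $\Lambda_i^p$ commutes with the box operators modulo $p$ and shifts the Euler-operator eigenvalue only by $p\,a_{ti}\equiv 0 \pmod p$ is exactly the detail the paper omits, and it is carried out correctly.
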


{\bf Remark 1:}  One can also deduce Corollary 3.11 as a consequence of Equation~(2.4) and \cite[Theorem~2.7 or Theorem~3.1]{AS1}.  The proof given here shows a more direct connection with integral $A$-hypergeometric series.

{\bf Remark 2:}  Let $U\subseteq{\mathbb P}^n_{{\mathbb C}(\Lambda)}$ be the complement of the hypersurface $f_\Lambda = 0$.  The differential operators $\partial/\partial\Lambda_k$ act on the cohomology classes in $H^n_{\rm DR}(U/{\mathbb C}(\Lambda))$ via the Gauss-Manin connection.  Put
\[ \Theta = \sum_{i=0}^n (-1)^i \frac{dx_0}{x_0}\wedge\cdots\widehat{\frac{dx_i}{x_i}}\cdots\wedge\frac{dx_n}{x_n}. \]
The cohomology classes with simple poles along $f_\Lambda = 0$ are spanned by the $n$-forms given in homogeneous coordinates as
\begin{equation}
 \frac{x^{{\bf a}_j}\Theta}{f_\Lambda(x_0,\dots,x_n)} \quad \text{for $j=1,\dots,M$.} 
\end{equation}
It can be shown that the cohomology class determined by the $n$-form (3.12) satisfies the $A$-hypergeometric system with parameter $\beta = -{\bf a}_j^+$.


\begin{thebibliography}{99}

\bibitem{AS1} A. Adolphson and S. Sperber.  Hasse invariants and ${\rm mod}\, p$ solutions of $A$-hypergeometric systems.  J. Number Theory {\bf 142} (2014), 183--210. 

\bibitem{AS} A. Adolphson and S. Sperber.  On logarithmic solutions of $A$-hypergeometric systems.  Preprint, available at {\tt arXiv:1402.5173}.

\bibitem{I} L. Illusie.  Ordinarit\'e des intersections compl\`etes g\'en\'erales.  The Grothendieck Festschrift, Vol.~II,  376--405, Progr.\ Math., 87, Birkh\"auser Boston, Boston, MA, 1990.

\bibitem{Ka} N. Katz.  Algebraic solutions of differential equations ($p$-curvature and the Hodge filtration). Invent.\ 
Math.\ {\bf 18} (1972), 1--118.

\bibitem{K} N. Koblitz.  $p$-adic variation of the zeta-function over families of varieties defined over finite fields. Compositio Math.\  {\bf 31}  (1975), no.\ 2, 119--218.

\bibitem{M} L. Miller.  \"Uber gew\"ohnliche Hyperfl\"achen. I. J. Reine Angew.\ Math.\  {\bf 282}  (1976), 96--113.

\bibitem{M2} L. Miller. \"Uber gew\"ohnliche Hyperfl\"achen. II. J. Reine Angew.\ Math.\  {\bf 283/284}  (1976), 402--420.

\end{thebibliography}
\end{document}